\newtheorem{theorem}{Theorem}
\newtheorem{corollary}{Corollary}
\newtheorem{remark}{Remark}
\newtheorem{lemma}{Lemma}
\def\be{\begin{equation}}
\def\ee{\end{equation}}
\def\ben{\begin{displaymath}}
\def\een{\end{displaymath}}
\def\baa{\begin{eqnarray}}
\def\eaa{\end{eqnarray}}
\def\ba{\begin{array}}
\def\ea{\end{array}}
\def\la{\label}
\def\C{{\mathbb C}}
\def\Z{{\mathbb Z}}
\def\Q{{\mathbb Q}}
\def\H{{\mathcal H}}
\def\Ht{\tilde{{\mathcal H}}}
\def\Hc{\check{{\mathcal H}}}
\def\Hb{\overline{{\mathcal H}}}
\def\M{{\mathcal M}}
\def\Mb{\overline{{\mathcal M}}}
\def\Pl{\mathbb{P}^1}
\def\2x2{{\left(\!\!\begin{array}{cc}a&b\\c&d\\\end{array}\!\!\right)}}
\def\del{\partial}
\def\f{\frac}
\def\e{\epsilon} 
\begin{document}
\title[Tau function and admissible covers]{Isomonodromic tau function on the space of admissible covers}

\author{A.~Kokotov$^1$}
\address{$^1$Department of Mathematics and Statistics, Concordia University,
1455 de Maisonneuve West, Montreal, H3G 1M8  Quebec,  Canada}
\author{D.~Korotkin$^1$}
\author{P.~Zograf$^2$}
\address{$^2$Steklov Mathematical Institute, Fontanka 27, St. Petersburg 191023 Russia}

\thanks{AK and DK were supported in part by NSERC; DK was also
supported by FQRNT and CURC; PZ was partially supported by the RFBR grant 08-01-00379-a and 
by the President of Russian Federation grant NSh-2460.2008.1.}

\begin{abstract}
The isomonodromic tau function of the Fuchsian
differential equations associated to Frobenius structures on
Hurwitz spaces can be viewed as a section of a line bundle on
the space of admissible covers. We study the asymptotic behavior
of the tau function near the boundary of this space and compute its
divisor. This yields an explicit formula for the pullback of the Hodge
class to the space of admissible covers in terms of the classes of 
compactification divisors.
\end{abstract}

\maketitle

\tableofcontents

\section{Introduction}

The space of admissible covers is a natural compactificaion of the Hurwitz space of smooth branched covers
of the complex projective line $\Pl$, or, equivalently, meromorphic functions on complex algebraic curves, of given
degree and genus. This space was first introduced by J.~Harris and D.~Mumford and appeared to be quite useful in computing
the Kodaira dimension of the moduli space of stable curves \cite{HMu}. Lately this space has attracted a major attention,
mainly in connection with Gromov-Witten theory, quantum cohomology, Hurwitz numbers, Hodge integrals, etc. (The literature
on this subject is abundant, and it is not possible to give even a very brief review here.) 

On the other hand, Hurwitz spaces appear naturally in relationship with the Riemann-Hilbert problem, and carry a natural
Frobenius structure \cite{D}. The tau function for the corresponding isomonodromic deformations can be written explicitly
in terms of the theta function and the prime form on the covering complex curve \cite{KK1}.

In this paper we study the asymptotic behavior of the isomonodromic tau function near the boundary of the Hurwitz space
given by nodal admissible covers, and explicitly compute its divisor. 
More precisely, a power of the tau function
corrected by a power the Vandermonde determinant of the critical values of the branched
cover descends to a holomorphic section of (the pullback of) the Hodge bundle on the
Hurwitz space. Moreover this section extends to a meromorphic section of
the Hodge bundle on the compactification of the Hurwitz space by
admissible covers.
This allows us to express (the pullback of) the 
Hodge class on the space of admissible covers as a linear combination of boundary divisors (in small genera this also 
gives a non-trivial relation between the boundary divisors). 

The paper is organized as follows. In Section 2 we define the isomonodromic tau function, give an explicit formula for it 
(Theorem 1), 
study its transformation properties and interprete it as a holomorphic section of a line bundle on the Hurwitz space.
Section 3 contains the main results of the paper: an asymptotic formula for the tau function near the boundary of the
space of admissible covers (Theorem 2), and a formula for the Hodge class in terms of the classes of boundary divisors (Theorem 3).  
The special cases of the latter include a formula of Cornalba-Harris for the Hodge class on the hyperelliptic locus \cite{CH}, and
a relation of Lando-Zvonkine between the compactification divisors in Hurwitz spaces of genus 0 branched covers \cite{LZ}. 

\section{Isomonodromic tau function}

\subsection{Hurwitz spaces}

Let $C$ be a smooth complex algebraic curve of genus $g$, and let
$f$ be a meromorphic function on $C$ of degree $d>0$. We can think
of $f$ as a holomorhic branched cover $f: C\rightarrow \Pl$
over the projective line $\Pl$. We call a meromorphic function (or
a branched cover) {\em generic} if it has only simple critical
values (branch points). For a generic $f$ the number of branch points is $n=2g+2d-2$, we 
denote them by $z_1,\dots,z_n\in\Pl$ and always assume that they are {\em ordered}. 

Two meromorphic functions $f_1:C_1\rightarrow\Pl$ and $f_2:C_2\rightarrow\Pl$
are called {\em strongly equivalent} (or simply {\em equivalent}), if there exists
an isomorphism $h: C_1\rightarrow C_2$ such that $f_1=f_2\circ h$, and
{\em weakly equivalent}, if there exist isomorphisms $h: C_1\rightarrow C_2$
and $\gamma: \Pl\rightarrow\Pl$ such that $\gamma\circ f_1\!=\!f_2\circ h$.
In addition to that we will also consider an equivalence relation for
meromorhic functions on Torelli marked curves. A {\em Torelli marking} is a choice
of symplectic basis $\alpha=\{a_i, b_i\}_{i=1}^g$ in the first homology group
$H_1(C)$ of $C$. A curve $C$ together with a symplectic basis $\alpha$ will be
denoted by $C^\alpha$. We say that two meromorphic functions on Torelli marked curves are
{\em Torelli equivalent}, if for Torelli marked curves
$C_1^{\alpha_1}, C_2^{\alpha_2}$ there exist an isomorphism $h: C_1\rightarrow C_2$ such that
$f_1=f_2\circ h$ and $h_*(\alpha_1)=\alpha_2$ elementwise.

For any fixed $g\geq 0$ and $d>0$ consider the space of all generic meromorphic
functions of degree $d$ on all smooth genus $g$ curves.
Denote by $\H_{g,d},\,\Ht_{g,d},\Hc_{g,d}$ the moduli spaces (called
{\em Hurwitz spaces}) defined by the weak, strong and Torelli equivalence
relations respectively (the latter requires the curves to be Torelli marked).
All three spaces are non-compact complex manifolds. The last two spaces
have dimension $n=2g+2d-2$ and the branch points $z_1,\dots,z_n$ provide
a system of local coordinates for both of them.
The group $PSL(2,\C)$ acts freely on $\Ht_{g,d}$ and $\Hc_{g,d}$ by linear
fractional transformations: for $\gamma=\2x2 \in PSL(2,\C)$ we have
$\gamma\circ f=\frac{af+b}{cf+d}$, so that, in particular,
$\H_{g,d}=\Ht_{g,d}/PSL(2,\C)$. In addition, the symplectic group
$Sp(2g,\Z)$ acts on $\Hc_{g,d}$ by changing Torelli marking, and
$\Ht_{g,d}=\Hc_{g,d}/Sp(2g,\Z)$. The actions of $PSL(2,\C)$ and $Sp(2g,\Z)$ on
$\Hc_{g,d}$ clearly commute.

In the sequel we will also deal with meromorphic functions (branched covers)
that have one fixed value, either regular at $z=\infty$, or  degenerate critical
of type $\mu=[m_1,\dots,m_r]$ at any $z\in \Pl$
($m_i>0$ are the ramification degrees of
the points in $f^{-1}(z),\;m_1+\dots+m_r=d$), with all other branch points being 
simple and finite (the number of these critical
values is $n(\mu)=2g+d+r-2$). The Hurwitz spaces of such functions defined 
modulo the weak (while keeping $z$ fixed), strong and Torelli equivalence relations we denote by 
$\H_{g,d}(z,\mu),\,\Ht_{g,d}(z,\mu)$ and $\Hc_{g,d}(z,\mu)$ respectively. The dimension
of the last two ones is $n(\mu)=2g+d+r-2$, and the simple branch points $z_1,\dots, z_{n(\mu)}$ serve as 
local coordinates for them as well. In particular, $\Ht_{g,d}(\infty, 1^d)$ and $\Hc_{g,d}(\infty,1^d)$
are open dense subsets of the Hurwitz spaces $\Ht_{g,d}$ and $\Hc_{g,d}$ respectively.

\subsection{Definition of the tau function}

For a Torelli marked curve $C^\alpha$, denote by $B(x,y)$ the 
{\em Bergman bidifferential}, that is, the unique symmetric meromorphic bidifferential 
on $C\times C$ with a quadratic pole of biresidue 1 on the diagonal and zero $a$-periods
(the details on meromorphic bidifferentials and the associated projective connections
can be found, e.g., in \cite{F} or \cite{T}). 
The $b$-periods of the Bergman bidifferential $B(x,y)$
\be
\omega_i=\int_{b_i}B(\cdot\,, y)dy
\ee
are the {\em normalized holomorphic differentials} on $C^\alpha$, that is,
\be
\int_{a_j}\omega_i=\delta_{ij},\quad\quad \int_{b_j}\omega_i=\Omega_{ij},\quad\quad i,j=1,\dots, g,
\ee
where the matrix $\Omega=\{\Omega_{ij}\}_{i,j=1}^g$ is the {\em period matrix} of $C^\alpha$.
In terms of local parameters $\zeta(x),\zeta(y)$ near the diagonal $\{x=y\}\in C\times C$, the bidifferential $B(x,y)$
has the following Laurent series expasion in  $\zeta(y)$  at the point  $\zeta(x)$
\be
B(x,y)=\left(\f{1}{(\zeta(x)-\zeta(y))^2}+\f{S_B(\zeta(x))}{6}
+O((\zeta(x)-\zeta(y))^2)\right)d\zeta(x) d\zeta(y),
\la{asW}
\ee
where $S_B$ is a projective connection on $C$ called the {\em Bergman projective connection}.
The latter means that $S_B$ transforms under the change $\zeta=\zeta(w)$ of the local parameter 
by the rule $S_B(w)=S_B(\zeta(w))\zeta'(w)^2+S_\zeta$, where 
$S_\zeta=\frac{\zeta'''}{\zeta'}-\frac{3}{2}\left(\frac{\zeta''}{\zeta'}\right)^2$ is the 
{\em Schwarzian derivative} of $\zeta(w)$ with respect to $w$.

Now consider the Schwarzian derivative $S_f=\frac{f'''}{f'}-\frac{3}{2}\left(\frac{f''}{f'}\right)^2$
of a meromorphic function $f:C\to\Pl$ with respect to a local parameter $\zeta$ on $C$. 
This is a meromorphic projective connection on $C$, so that the
difference $S_B-S_f$ is a meromorphic quadratic differential. 
Take the trivial line bundle on the Hurwitz space $\Hc_{g,d}(z,\mu)$ and consider the connection
\be
d_B=d+4\,\sum_{i=1}^{n(\mu)} \left({\rm Res}_{\,x_i}\f{S_B-S_f}{d f}\right) dz_i,
\la{Bercon}
\ee
where the sum is taken over all simple finite branch points $z_i$ of $f$,
and $x_i\in C$ are the corresponding critical points. Rauch's formulas imply that
this connection is flat (cf. \cite {KK1}). The tau function $\tau(C^\alpha, f)$ 
is locally defined as a horizontal
(covariant constant) section of the trivial line bundle on $\Hc_{g,d}(z,\mu)$ with respect to $d_B$:
\footnote{This tau function is the 24th power of the Bergman tau function studied in \cite {KK1} 
and the -48th power of the isomonodromic tau function
of the Frobenius manifold structure on
Hurwitz space introduced by Dubrovin \cite{D}. Our present definition
is more appropriate in the context of admissible covers.}
\be
\frac{\del \log \tau(C^\alpha,f)}{\del z_i}=
-4\,{\rm Res}_{\,x_i}\f{S_B-S_f}{d f},\ \
i=1, \dots, n.
\la{tau}
\ee

Let us now recall an explicit formula for the tau function $\tau(C^\alpha, f)$ derived in \cite {KK1}.
Take a nonsingular odd theta characteristic $\delta$ and consider the corresponding theta function
$\theta[\delta](v; \Omega)$, where $v=(v_1,\dots,v_g)\in \C^g$. Put 
$$\omega_\delta=\sum_{i=1}^g\frac{\del\theta[\delta]}{\del v_i}\left(0; \Omega\right)\,\omega_i\,.$$
All zeroes of the holomorphic 1-differential $\omega_\delta$ have even multiplicities, and 
$\sqrt{\omega_\delta}$ is a well-defined holomorphic spinor on $C$. Following Fay \cite{F}, consider 
the prime form \footnote{The prime form $E(x,y)$ is the canonical section of the line bundle on 
$C\times C$ associated with the diagonal divisor $\{x=y\}\subset C\times C$.}
\be
E(x,y)=\frac{\theta[\delta]\left(\int_x^y\omega_1,\dots,\int_x^y\omega_g; \Omega\right)}
{\sqrt{\omega_\delta}(x)\sqrt{\omega_\delta}(y)}.
\ee
To make the integrals uniquely defined, we fix $2g$ simple closed loops in the homology classes $a_i,b_i$
that cut $C$ into a connected domain, and pick the integration paths that do not intersect the cuts. 
The sign of the square root is chosen so that 
$E(x,y)=\frac{\zeta(y)-\zeta(x)}{\sqrt{d\zeta}(x)\sqrt{d\zeta}(y)}(1+O((\zeta(y)-\zeta(x))^2))$ as $y\rightarrow x$,
where $\zeta$ is a local parameter such that $d\zeta=\omega_\delta$.

We introduce local coordinates on $C$ that we call {\em natural} (or {\em distinguished}) with respect to $f$.
Consider the divisor $(df)=\sum_k d_k\,p_k,\;p_k\in C,\;d_k\in\Z,d_k\neq 0,$ of the meromorphic differential $df$.
We take $\zeta=f(x)$ as a local coordinate on $C-\bigcup_k p_k$, and  
\be
\zeta_k=\begin{cases}\left(f(x)-f(p_k)\right)^{\frac{1}{d_k+1}}&\text{if $d_k>0$,}\\
f(x)^{\frac{1}{d_k+1}}&\text{if $d_k<0$,}\end{cases}\label{np}
\ee
near $p_k\in C$. In terms of these coordinates we have 
$E(x,y)=\frac{E(\zeta(x),\zeta(y))}{\sqrt{d\zeta}(x)\sqrt{d\zeta}(y)}$, and we define
\begin{eqnarray*}
E(\zeta,p_k)&=&\lim_{y\rightarrow p_k}E(\zeta(x),\zeta(y))\sqrt{\frac{d\zeta_k}{d\zeta}}(y),\\
E(p_k,p_l)&=&\lim_{\stackrel{\scriptstyle x\rightarrow p_k}{y\rightarrow p_l}}E(\zeta(x),\zeta(y))
\sqrt{\frac{d\zeta_k}{d\zeta}}(x)\sqrt{\frac{d\zeta_l}{d\zeta}}(y)\,.
\end{eqnarray*}
Let ${\mathcal A}^x$ be the Abel map with the basepoint $x$, and let $K^x=(K^x_1,\dots,K^x_g)$ be the vector of Riemann constants
\be
K^x_i=\frac{1}{2}+\frac{1}{2}\Omega_{ii}-\sum_{j\neq i}\int_{a_i}\left(\omega_i(y)\int_x^y\omega_j\right)dy
\label{rc}\ee
(as above, we assume that the integration paths do not intersect the cuts on $C$).
Then we have ${\mathcal A}^x((df))+2K^x=\Omega Z+Z'$ for some $Z,Z'\in\Z^g$ . Now put
\be
\tau(C^\alpha, f)=
\f{\left(\left.\left(\sum_{i=1}^g\omega_i(\zeta)\frac{\partial}{\partial v_i}\right)^g
\theta(v;\Omega)\right|_{v=K^{\zeta}}\right)^{16}}{e^{4\pi\sqrt{-1}\langle\Omega Z+4K^\zeta,Z\rangle}\;W(\zeta)^{16}}
\frac{\prod_{k<l}E(p_k,p_l)^{4d_k d_l}}
{\prod_k E(\zeta,p_k)^{8(g-1)d_k}}\;.
\label{taukk}
\ee
Here $\theta(v;\Omega)=\theta[0](v;\Omega)$ is the Riemann theta function, $v=(v_1,\dots,v_g)\in\C^g$,
and $W$ is the Wronskian of the normalized holomorphic differentials $\omega_1,\dots,\omega_g$ on 
$C^\alpha$. \footnote{The expression 
${\mathcal C}(\zeta)=\frac{1}{W(\zeta)}\left.\left(\sum_{i=1}^g\omega_i(\zeta)\frac{\partial}{\partial v_i}\right)^g
\theta(v;\Omega)\right|_{v=K^{\zeta}}$ first appeared in \cite{F} in a different context.}

\begin{theorem}{\rm (cf. \cite {KK1})} Let $\tau(C^\alpha, f)$ be given by formula (\ref{taukk}). Then\\
(i) $\tau(C^\alpha, f)$ does not depend on either $\zeta$ or the choice of the cuts
in the homology classes $a_i, b_i$;\\
(ii) $\tau(C^\alpha, f)$ is a nowhere vanishing holomorphic function on the Hurwitz space $\Hc_{g,d}(\infty,1^d)$
of generic meromorphic functions with only finite simple branch points, whereas on the Hurwitz spaces 
$\Hc_{g,d}(z,\mu)$ with non-trivial $\mu$ it is defined 
locally up to a root of unity and may depend on the choice of the parameters $\zeta_k$;\\
(iii) $\tau(C^\alpha, f)$ is an isomonodromic tau function, that is, a solution of (\ref{tau}).
\end{theorem}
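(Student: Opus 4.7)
The plan is to verify the three assertions by direct computation, exploiting the transformation and variational properties of theta functions, prime forms, and the Bergman bidifferential. Since $\tau$ in (\ref{taukk}) is the $24$th power of the Bergman tau function of \cite{KK1}, the bulk of the needed computation has been done there, and what remains is to account for the $24$th-power rescaling and for the case of non-trivial $\mu$.

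For part (i), independence from $\zeta$ is the statement that the auxiliary point $\zeta\in C$ at which one evaluates $\omega_i$, $W$, $K^\zeta$ and $E(\zeta,p_k)$ in (\ref{taukk}) does not affect the product, even though each individual factor depends on it. I would compute $\partial_\zeta\log\tau$ using the basepoint-dependence of $K^\zeta$ and of the Abel maps, together with Fay's identities for the quantity $\mathcal C(\zeta)$, and verify the cancellation. Independence from the cuts is a quasi-periodicity check: a change of cuts shifts the Abel-map integrals by a lattice vector $\Omega m+m'$ and shifts $K^\zeta$ accordingly; the $16$th power of the quasi-periodicity factor of $\theta$ is exactly compensated by the change of $e^{-4\pi\sqrt{-1}\langle\Omega Z+4K^\zeta,Z\rangle}$ and by the transformation of the prime forms under the new lattice data.

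The heart of the matter is part (iii). I would differentiate $\log\tau$ in (\ref{taukk}) with respect to a simple branch point $z_i$ and apply Rauch's variational formulas, in particular $\partial\Omega_{jk}/\partial z_i=2\pi\sqrt{-1}\,\omega_j(x_i)\omega_k(x_i)$, evaluated in the natural parameter $\zeta_i=\sqrt{f-z_i}$ at the critical point $x_i$, together with the induced variations of the normalized holomorphic differentials, the prime form, the Wronskian, the Abel map, and the Riemann constants. The formula (\ref{taukk}) has been crafted so that upon differentiation the various logarithmic derivatives assemble into a single residue at $x_i$: the $O(1)$ coefficient in the diagonal expansion (\ref{asW}) of the Bergman bidifferential in the natural parameter produces the Bergman projective connection $S_B$, while the Taylor expansion of $f$ at $x_i$ produces the Schwarzian $S_f$. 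The net result is exactly $-4\,\mathrm{Res}_{x_i}(S_B-S_f)/df$.

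Part (ii) is then almost immediate: on $\Hc_{g,d}(\infty,1^d)$ the critical points of $f$ remain distinct and the prime forms $E(p_k,p_l)$ do not vanish, while $\mathcal C(\zeta)$ is a nowhere-vanishing section by Fay; hence $\tau$ is holomorphic and nowhere vanishing. For non-trivial $\mu$, the natural parameter $\zeta_k=(f-f(p_k))^{1/(d_k+1)}$ is defined only up to a $(d_k+1)$st root of unity, which via the explicit formula translates into an overall root-of-unity ambiguity for $\tau$ and the dependence on the chosen branches. I expect the main obstacle to be the residue computation in (iii): one has to extract the correct finite part of $B(x,y)$ and of the Schwarzian of $f$ in the natural parameter at the critical point and verify that all prefactors and combinatorial constants reassemble into $-4\,\mathrm{Res}_{x_i}(S_B-S_f)/df$, in the manner of \cite{KK1}.
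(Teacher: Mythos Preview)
The paper does not actually supply a proof of this theorem: it is stated with the parenthetical ``(cf.\ \cite{KK1})'' and no argument follows, so the result is simply imported from \cite{KK1} (with the adjustment that the present $\tau$ is the $24$th power of the Bergman tau function studied there). Your outline is precisely the strategy of \cite{KK1}: Rauch's variational formulas for $\Omega$, the normalized differentials, the prime form and the Riemann constants applied to the explicit expression (\ref{taukk}) to obtain (iii); Fay's identities for $\mathcal C(\zeta)$ and the quasi-periodicity of $\theta$ and $E$ for (i); and the non-vanishing of $\mathcal C$ and of the prime forms off the diagonal for (ii). So your proposal is correct and coincides with the approach the paper invokes by citation; there is nothing to compare against in the present text.
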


\subsection{Tau function as a section of a line bundle}

We start with describing the behavior of the tau function under the linear fractional
transformations of $f$ and changing of Torelli marking on $C$. Unfortunately, $\tau(C^\alpha, f)$ is smooth
only on $\Hc_{g,d}(\infty, 1^d)$ and becomes singular on the entire Hurwitz space $\Hc_{g,d}$. 
To overcome this difficulty, denote by  $V(z_1,\dots, z_n)= \prod_{i<j} (z_i-z_j)$ the Vandermonde determinant of the critical 
values $z_1,\dots, z_n$ of $f$, and put
\be
\check{\eta}= \tau^{n-1}V^{-6}
\la{eta}
\ee
\begin{lemma}
The function $\check{\eta}=\check{\eta}(C^\alpha, f)$ extends to the Hurwitz space
$\Hc_{g,d}$ as a nowhere vanishing holomorphic function and is invariant with respect to the natural action of $PSL(2,\C)$ on $\Hc_{g,d}$.
\end{lemma}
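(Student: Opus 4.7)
The plan is three-fold: (i) derive the transformation law of $\tau$ under $f\mapsto\gamma\circ f$ for $\gamma\in PSL(2,\C)$ from the defining ODE (\ref{tau}); (ii) combine it with the transformation of $V$ to obtain invariance of $\check\eta$; (iii) use this invariance to extend $\check\eta$ across the locus in $\Hc_{g,d}$ where $\infty$ becomes a critical value.

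For (i), Möbius invariance of the Schwarzian gives $S_{\gamma\circ f}=S_f$ and the critical points $x_i\in C$ are unchanged, so in the right-hand side of (\ref{tau}) only $d(\gamma\circ f)=(cf+d)^{-2}df$ changes (normalizing $ad-bc=1$). In a local coordinate $t$ at $x_i$ with $f=z_i+a_it^2+\ldots$, I expand $(cf+d)^2=(cz_i+d)^2+2c a_i(cz_i+d)t^2+O(t^3)$ and multiply against $(S_B-S_f)/df$, which has a third-order pole at $x_i$ with universal leading coefficient $3/(4a_i)$ inherited from the singular part $S_f\sim-3/(2t^2)$ of the Schwarzian at a simple branch point. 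Reading off the $1/t$ coefficient gives
\be
\mathrm{Res}_{x_i}\f{S_B-S_f}{d(\gamma\circ f)}=(cz_i+d)^2\,\mathrm{Res}_{x_i}\f{S_B-S_f}{df}+\f{3c(cz_i+d)}{2}.
\ee
Applying the chain rule $\partial/\partial\tilde z_i=(cz_i+d)^2\,\partial/\partial z_i$ (with $\tilde z_i=\gamma(z_i)$) turns (\ref{tau}) into $\partial\log\tau(\gamma\circ f)/\partial z_j - \partial\log\tau(f)/\partial z_j=-6c/(cz_j+d)$, which integrates to
\be
\tau(C^\alpha,\gamma\circ f)=C_\gamma\,\tau(C^\alpha,f)\prod_{j=1}^n(cz_j+d)^{-6},
\ee
where $C_\gamma$ is independent of $(z_1,\ldots,z_n)$. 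Composing two such transformations, together with the cocycle identity $c_{\gamma_1\gamma_2}z+d_{\gamma_1\gamma_2}=(c_{\gamma_1}\gamma_2(z)+d_{\gamma_1})(c_{\gamma_2}z+d_{\gamma_2})$, forces $\gamma\mapsto C_\gamma$ to be a holomorphic character of $PSL(2,\C)$; since $PSL(2,\C)$ is perfect, $C_\gamma\equiv 1$.

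For (ii), the Vandermonde obeys $V(\gamma(z))=V(z)\prod_j(cz_j+d)^{-(n-1)}$, so the factors $\prod_j(cz_j+d)^{\pm 6(n-1)}$ in $\tau^{n-1}$ and $V^{-6}$ cancel exactly and $\check\eta(\gamma\circ f)=\check\eta(f)$ on $\Hc_{g,d}(\infty,1^d)$. For (iii), given $p_0=(C_0^\alpha,f_0)\in\Hc_{g,d}$, choose $\gamma_0\in PSL(2,\C)$ with $\gamma_0^{-1}(\infty)$ disjoint from the critical-value set of $f_0$; then $\gamma_0\circ f_0\in\Hc_{g,d}(\infty,1^d)$, and by openness the same holds on a neighborhood $U$ of $p_0$. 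On $U$ I define $\check\eta(f):=\check\eta(\gamma_0\circ f)$, which is holomorphic and nowhere vanishing by Theorem 1(ii) (with $V(\gamma_0(z))\neq 0$ since the $\gamma_0(z_j)$ remain distinct). Independence of the choice of $\gamma_0$ and compatibility with the original definition on $\Hc_{g,d}(\infty,1^d)\cap U$ both follow from the invariance established in (ii).

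The technical core is the residue identity in (i): one must carefully track the Laurent contributions produced when the polynomial expansion of $(cf+d)^2$ around $x_i$ meets the third-order pole of $(S_B-S_f)/df$, and must identify the universal constant $-3/2$ in the singular part of the Schwarzian at a simple branch point. All remaining steps reduce to routine bookkeeping with the cocycle identity and a continuity/openness argument.
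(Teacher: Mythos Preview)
Your proof is correct and follows essentially the same route as the paper: both compute the transformation of the defining ODE~(\ref{tau}) under $f\mapsto\gamma\circ f$ via $S_{\gamma\circ f}=S_f$ and a residue calculation at simple branch points, then kill the integration constant using triviality of characters of $PSL(2,\C)$. The only cosmetic differences are that the paper works in the distinguished parameter $\zeta_i=\sqrt{f-z_i}$ (your $a_i=1$) and applies the character argument directly to $\check\eta$ rather than to $\tau$; your step~(iii), making the extension across the locus where $\infty$ is a branch point explicit via the $PSL(2,\C)$-action, is left implicit in the paper's proof.
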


\begin{proof}
Take $\gamma=\2x2\in PSL(2,\C)$ and denote by $z^\gamma_i:=\frac{a z_i+b}{c z_i +d}$ the branch points of the function 
$f^\gamma=\gamma\circ f$.

According to (\ref{tau}), we have
\be
\frac{\del\log\tau(C^\alpha, f^\gamma)}{\del z_i^\gamma}=-4\, {\rm Res}_{\,x_i}\frac{S_B-S_{f^\gamma}}{d f^\gamma}=
-4\, {\rm Res}_{\,x_i}\frac{S_B-S_{f}}{d f^\gamma}\,,
\la{projconSSm}
\ee
since $S_{f^\gamma}=S_f$ by the property of the Schwarzian derivative. 
Moreover, we have $d z_i^\gamma/dz_j= (cz_i+d)^{-2}$,
and $d f^\gamma/df = (cf+d)^{-2}$, so that
$$
\frac{\del\log\tau(C^\alpha, f^\gamma)}{\del z_i}=\f{-4}{(c z_i+d)^2}\,{\rm Res}_{\,x_i}\left((cf+d)^2\frac{S_B-S_{f}}{d f}\right)\;.
$$
In terms of the natural local parameter $\zeta_i(x)=\sqrt{f(x)-z_i}$ near the critical point $p_i \in f^{-1}(z_i)$ this gives
$f=\zeta_i^2+z_i$, $df=2\zeta_i d\zeta_i$ and $S_f= -3/2\zeta_i^{-2}$.  Therefore, 
\begin{eqnarray}
&&\f{\del\log\tau(C^\alpha, f^\gamma)}{\del z_i}\nonumber \\
&&=-4\,{\rm Res}_{\,x_i}\f{S_B-S_f}{df}
-\f{3}{(cz_i+d)^2}\,{\rm Res}_{\,\zeta_i=0}\f{(c \zeta_i^2+c z_i+d)^2d\zeta_i}{\zeta_i^3}\nonumber \\
&&=\f{\del\log\tau(C^\alpha, f)}{\del z_i}-6\,\f{c}{c z_i+d}\;.
\la{transtau}
\end{eqnarray}
On the other hand, a simple computation shows that
\be
\frac{\del\log V^\gamma}{\del z_i}=\frac{\del\log V}{\del z_i}- (n-1)\f{c}{c z_i+d},
\la{transvan}
\ee
where $V^\gamma=\prod_{i< j}(z_i^\gamma-z_j^\gamma)$.

From (\ref{transtau}) and (\ref{transvan}) it follows that 
$\check{\eta}(C^\alpha, f^\gamma)=\chi(\gamma)\check{\eta}(C^\alpha, f)$,
where $\chi(\gamma)$ is a $\C^*$-representation of $PSL(2,\C)$ and
hence $\chi(\gamma)=1$ identically.
\end{proof}

The next lemma describes the behavior of the tau function under the natural action of $\C^*=\C-\{\infty\}$ 
on the space $\Hc_{g,d}(\infty, \mu),\,\mu=(m_1,\dots,m_r),$
($\C^*$ acts on meromorphic functions by multiplication, thus leaving $\infty$ fixed).  
  
\begin{lemma}
The tau function $\tau(C^\alpha, f)$ on the Hurwitz space $\Hc_{g,d}(\infty, \mu)$ has the property
\be
\tau(C^\alpha, \e f)=\e^{3n(\mu) - 2d+2\sum_{i=1}^r 1/m_i}\; \tau(C^\alpha, f)
\la{taue}
\ee
for any $\e\in\C^*$, where $n(\mu)=2g+d+r-2$ is the number of simple finite branch points of $f$.
\end{lemma}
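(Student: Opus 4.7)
The plan is to compute how each factor of the explicit formula (\ref{taukk}) scales under $f\mapsto\epsilon f$. Two observations streamline the calculation. First, since $d(\epsilon f)=\epsilon\,df$, the divisor $(df)=\sum_k d_k p_k$ is unchanged, so the same set of points $p_k$ and multiplicities $d_k$ enter the formula for $\tau(C^\alpha,\epsilon f)$; this set decomposes into the $n(\mu)$ simple finite critical points (with $d_k=1$) and the $r$ points over $\infty$ (with $d_k=-(m_j+1)$). Second, everything in (\ref{taukk}) that is intrinsic to $C^\alpha$ — namely $\Omega$, $\theta$, the normalized differentials $\omega_i$, the vectors $K^\zeta$ and $Z$, and the prime form $E(x,y)$ viewed as a section of a line bundle on $C\times C$ — is unaffected; the entire $\epsilon$-dependence arises from the rescaling of the distinguished local coordinates (\ref{np}), which satisfy $\zeta=f\mapsto\epsilon\zeta$ and $\zeta_k\mapsto\epsilon^{1/(d_k+1)}\zeta_k$.

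The effect on each coordinate-dependent piece is now read off. The theta derivative $\bigl(\sum_i\omega_i(\zeta)\partial_{v_i}\bigr)^g\theta|_{v=K^\zeta}$ scales by $\epsilon^{-g}$ (one factor of $\epsilon^{-1}$ per occurrence of $\omega_i(\zeta)=\omega_i/d\zeta$), while the Wronskian $W(\zeta)$ of $g$ one-forms in the variable $\zeta$ scales by $\epsilon^{-g(g+1)/2}$, so raising their ratio to the $16$-th power contributes $\epsilon^{8g(g-1)}=\epsilon^{2D^2+4D}$ with $D=2g-2$. The Abel--Riemann exponential is coordinate-independent, hence unaffected. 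From the expansion of $E(x,y)$ near the diagonal one obtains
\begin{equation*}
E(\zeta,p_k)\mapsto\epsilon^{1/2+1/(2(d_k+1))}E(\zeta,p_k),\qquad E(p_k,p_l)\mapsto\epsilon^{1/(2(d_k+1))+1/(2(d_l+1))}E(p_k,p_l).
\end{equation*}
Raising to the exponents prescribed by (\ref{taukk}) and summing, the factor $\prod_k E(\zeta,p_k)^{-8(g-1)d_k}$ contributes $-2D^2-2DC$ and the factor $\prod_{k<l}E(p_k,p_l)^{4d_kd_l}$ contributes $2DC-2B_1$ (using the symmetry $\sum_{k<l}d_kd_l(\tfrac{1}{d_k+1}+\tfrac{1}{d_l+1})=\sum_{k\neq l}d_kd_l/(d_k+1)$), where $C=\sum_k d_k/(d_k+1)$ and $B_1=\sum_k d_k^2/(d_k+1)$. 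The $D^2$ and $DC$ terms cancel, and the total scaling exponent collapses to $4D-2B_1$.

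To conclude, evaluate $B_1$ by splitting over the two types of points: the $n(\mu)$ simple critical points contribute $n(\mu)/2$, while each pole $q_j$ contributes $-(m_j+1)^2/m_j=-(m_j+2+1/m_j)$, so $B_1=n(\mu)/2-d-2r-\sum_{j=1}^r 1/m_j$. Substituting $D=n(\mu)-d-r$ into $4D-2B_1$ produces exactly $3n(\mu)-2d+2\sum_{j=1}^r 1/m_j$, as claimed. The main obstacle in carrying out this plan is the careful bookkeeping of the scaling factors in the middle step; once the contributions are correctly assembled the final identity is elementary. The fractional exponent $\epsilon^{2\sum 1/m_j}$ is itself only defined up to a root of unity, which is consistent with the ambiguity of $\tau$ on $\Hc_{g,d}(\infty,\mu)$ recorded in Theorem~1(ii); in the special case $\mu=1^d$ the root-of-unity ambiguity disappears and the result matches the restriction of Lemma~1 to the subgroup $w\mapsto\epsilon w\in PSL(2,\C)$.
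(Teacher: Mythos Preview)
Your proof is correct and follows exactly the approach the paper indicates: track how the distinguished local parameters (\ref{np}) rescale under $f\mapsto\epsilon f$ and feed this into the explicit formula (\ref{taukk}). The paper's own proof merely states that substituting $\zeta^\epsilon=\epsilon\zeta$ and $\zeta_k^\epsilon=\epsilon^{1/(d_k+1)}\zeta_k$ into (\ref{taukk}) yields (\ref{taue}) without writing out the bookkeeping, whereas you carry through the full computation (the $\mathcal{C}(\zeta)^{16}$, $E(\zeta,p_k)$ and $E(p_k,p_l)$ contributions and the cancellation down to $4D-2B_1$); everything checks.
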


\begin{remark}
{\rm In the case $\mu=1^d$ this is a consequence of the previous lemma
for} $\gamma={\rm diag}\,(\e^{1/2},\e^{-1/2})$.
\end{remark}

\begin{proof} 
It is easy to see that the difference between the explicit expressions for
$\tau(C^\alpha, f)$ and $\tau(C^\alpha, \e f)$ in (\ref{taukk}) comes from the different choice
of natural local parameters $\zeta$ on $C-\cup_k p_k$ and $\zeta_k$ at the points $p_k$ of the divisor $(df)$. 
Clearly, $\zeta^\e=\e\zeta$ and, according to (\ref{np}), 
$$\zeta_k^\e=\begin{cases}\e^{\frac{1}{d_k+1}}\left(f(x)-f(p_k)\right)^{\frac{1}{d_k+1}}&\text{if $d_k>0$,}\\
\e^{\frac{1}{d_k+1}}f(x)^{\frac{1}{d_k+1}}&\text{if $d_k<0$.}\end{cases}$$
Moreover, $d_k=1$ for all zeroes of $df$, and and $d_k=-m_i-1,\,i=1,\dots,r,$ for the poles of $df$.
Substituting these parameters $\zeta_k^\e$ into (\ref{taukk}), we get Eq. (\ref{taue}).
\end{proof}

\begin{lemma}
On the Hurwitz space $\Hc_{g,d}(\infty, \mu)$ we have the identity
\be
\sum_{i=1}^{n(\mu)} z_i\, {\rm Res}_{\,x_i}\f{S_B-S_f }{d f}= -\f{3n(\mu)}{4} +\f{d}{2} -\f{1}{2}\sum_{i=1}^r \f{1}{m_i}\;.
\la{Euler1}
\ee
\end{lemma}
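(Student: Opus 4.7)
The plan is to obtain the identity as the infinitesimal form of the scaling law established in Lemma 2. The right-hand side of (\ref{Euler1}) is, up to the factor $-1/4$, precisely the exponent $3n(\mu) - 2d + 2\sum 1/m_i$ appearing in (\ref{taue}), so the two statements should be related by $\frac{d}{d\e}\big|_{\e=1}$.

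Concretely, I would view $\log\tau(C^\alpha,\e f)$ as a function of $\e$ and compute its derivative at $\e=1$ in two ways. On one hand, Lemma 2 gives directly
\be
\frac{d}{d\e}\log\tau(C^\alpha,\e f)\bigg|_{\e=1}= 3n(\mu)-2d+2\sum_{i=1}^r\frac{1}{m_i}.\nn
\ee
On the other hand, the branch points of $\e f$ are exactly $\e z_1,\dots,\e z_{n(\mu)}$, so applying the chain rule to $\tau$ regarded as a function of its branch points, together with the defining equation (\ref{tau}), yields
\be
\frac{d}{d\e}\log\tau(C^\alpha,\e f)\bigg|_{\e=1}=\sum_{i=1}^{n(\mu)} z_i\,\frac{\del\log\tau(C^\alpha,f)}{\del z_i}= -4\sum_{i=1}^{n(\mu)} z_i\,{\rm Res}_{\,x_i}\frac{S_B-S_f}{df}.\nn
\ee
Equating these two expressions and dividing by $-4$ gives (\ref{Euler1}).

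There is no real obstacle here; the only subtlety to check is that the scaling $f\mapsto\e f$ does not change the Torelli-marked curve $C^\alpha$ itself (it only changes the meromorphic function), so the derivatives $\del_{z_i}$ of $\tau(C^\alpha,f)$ are indeed the only contributions coming from the $\e$-dependence, and the chain rule applies cleanly with $d(\e z_i)/d\e|_{\e=1}=z_i$. One could alternatively derive (\ref{Euler1}) directly by examining how $S_f$ transforms under $f\mapsto\e f$ near each critical point in terms of the natural parameter $\zeta_i$, but the argument above is essentially forced by Lemma 2 and is the most economical route.
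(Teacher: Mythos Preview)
Your argument is correct and coincides with the paper's own proof: one applies the Euler operator $\sum_i z_i\,\del_{z_i}$ to $\log\tau$ using the homogeneity relation (\ref{taue}) from Lemma~2, and then invokes the defining equation (\ref{tau}) to rewrite each $\del_{z_i}\log\tau$ as $-4\,{\rm Res}_{x_i}(S_B-S_f)/df$. The paper states this in one line; your version just spells out the chain-rule step more explicitly.
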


\begin{proof} 
The homogeneity property (\ref{taue}) implies that
$$\sum_{i=1}^{n(\mu)} z_i\f{\del}{\del z_i}\log \tau(C^\alpha, f) = 3n(\mu)-2d+2\sum_{i=1}^r \f{1}{m_i}\;.$$
This immediately yields (\ref{Euler1}) due to the definition (\ref{tau}) of the tau function.
\end{proof}

The behavior of the tau function under the change of Torelli marking of $C$ is described in the following lemma: 

\begin{lemma}
Let two canonical bases $\alpha=\{a_i,b_i\}_{i=1}^g$ and $\alpha'=\{a'_i,b'_i\}_{i=1}^g$ in $H_1(C)$
be related by $\alpha'=\sigma\alpha$, where 
\be
\sigma=
\left(\begin{array}{cc} D & C\\
B & A \end{array}\right)\in Sp(2g,\Z).
\la{symtrans}\ee
Then 
\be
\f{\tau(C^{\alpha'}, f)}{\tau(C^\alpha, f)}= 
{\rm det}(C\Omega+D)^{24}\;.
\la{tautaut}
\ee
where ${\Omega}$ is the period matrix of the Torelli marked Riemann sutface $C^\alpha$.
\end{lemma}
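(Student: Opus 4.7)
The plan is to differentiate (\ref{tau}) and integrate. Since the Schwarzian $S_f$ is independent of the Torelli marking, it cancels between the two tau functions:
\[
\f{\del}{\del z_i}\log\f{\tau(C^{\alpha'},f)}{\tau(C^\alpha,f)}=-4\,{\rm Res}_{\,x_i}\f{S'_B-S_B}{df}\,,
\]
where $S'_B$ is the Bergman projective connection computed in the new marking $\alpha'$. The problem thus reduces to computing the quadratic differential $S'_B-S_B$ on $C$.

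The key input is the transformation law for the Bergman bidifferential. Set $M=(C\Omega+D)^{-1}C$; it is symmetric by the symplectic relation $CD^T=DC^T$. Subtracting from $B$ the unique holomorphic bidifferential on $C\times C$ that restores the vanishing of the $a'$-periods, using the paper's normalization $\omega_j=\int_{b_j}B(\cdot,y)\,dy$, one gets
\[
B'(x,y)-B(x,y)=-\sum_{j,k}M_{jk}\,\omega_j(x)\,\omega_k(y)\,.
\]
Reading the regular part at the diagonal via (\ref{asW}) translates this into the quadratic-differential identity $(S'_B-S_B)\,dx^2=-6\sum_{j,k}M_{jk}\,\omega_j\omega_k$.

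Next I would apply Rauch's variational formula, which in the paper's normalization takes the form ${\rm Res}_{\,x_i}(\omega_j\omega_k/df)=\del\Omega_{jk}/\del z_i$. Substituting, using the symmetry of $\Omega$ and Jacobi's formula for the logarithmic derivative of a determinant,
\[
\f{\del}{\del z_i}\log\f{\tau(C^{\alpha'},f)}{\tau(C^\alpha,f)}=24\sum_{j,k}M_{jk}\,\f{\del\Omega_{jk}}{\del z_i}=24\,\f{\del}{\del z_i}\log\det(C\Omega+D)\,.
\]
Integrating yields $\tau(C^{\alpha'},f)/\tau(C^\alpha,f)=c(\sigma)\det(C\Omega+D)^{24}$ for some $c(\sigma)$ independent of $(C^\alpha,f)$.

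The expected main obstacle is pinning down $c(\sigma)\equiv 1$. Composing two symplectic changes of marking shows that $\sigma\mapsto c(\sigma)$ is a character $Sp(2g,\Z)\to\C^*$, hence (since the abelianization of $Sp(2g,\Z)$ is finite) it takes values in roots of unity. To eliminate this residual phase one substitutes the explicit formula (\ref{taukk}) and tracks the classical modular multiplier of each factor: the Riemann theta derivative, the Wronskian $W$, the prime forms $E(p_k,p_l)$ and $E(\zeta,p_k)$, and the exponential in the Riemann constants. The exponents $16$, $4d_kd_l$ and $8(g-1)d_k$ in (\ref{taukk}) are arranged so that, when combined with these standard multipliers, all root-of-unity contributions cancel and the net modular factor is precisely $\det(C\Omega+D)^{24}$.
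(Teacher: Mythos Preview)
Your argument is correct in outline but takes a genuinely different route from the paper. The paper proves the lemma by a direct substitution: it invokes the explicit formula (\ref{taukk}), together with the known transformation laws for the prime form $E(x,y)$ and for Fay's quantity ${\mathcal C}(x)$ under a symplectic change of basis (formulas (1.20) and (1.23) in \cite{F2}), and reads off the factor ${\rm det}(C\Omega+D)^{24}$. No differential equations or Rauch formulas enter.

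Your approach instead uses only the defining equation (\ref{tau}): the transformation law for the Bergman bidifferential gives $S'_B-S_B$ explicitly as a quadratic differential in the $\omega_j$, and Rauch's formula converts the residues into $\partial\Omega_{jk}/\partial z_i$, producing $24\,\partial_{z_i}\log\det(C\Omega+D)$. This is a cleaner and more conceptual derivation of the \emph{exponent} $24$, and it explains why the Hodge bundle appears. It also makes transparent that the tau function is tied to $\det(C\Omega+D)$ through the period matrix alone, independently of the intricate formula (\ref{taukk}).

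The weakness is the last paragraph. After integration you are left with a constant $c(\sigma)$, and your cocycle argument only shows $c(\sigma)$ is a root of unity (trivial for $g\geq 3$, but a $12$th or $2$nd root for $g=1,2$). To kill this phase you propose to ``substitute (\ref{taukk}) and track the classical modular multiplier of each factor''---but that is exactly the computation the paper carries out in full, using \cite{F2}. So your alternative route does not bypass the explicit formula; it postpones it to the determination of a single constant. If you want a self-contained proof along your lines, you should either carry out that modular bookkeeping explicitly (as the paper does), or give an independent argument for $c(\sigma)=1$ in low genus, for instance by specializing to a family where both sides can be evaluated directly.
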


\begin{proof}
To establish this transformation property, we use the explicit formula (\ref{taukk}). 
According to Lemma 6 of \cite{KK2}, when $df$ has at least one simple zero one can always choose 
the cut system on $C$ in such a way that $Z=Z'=0$ in (\ref{taukk}). The change of basis $\alpha'=\sigma\alpha$ 
results in the following transformation of the prime form $E(x,y)$:
\be 
E'(x,y)= E(x,y)e^{\sqrt{-1} \pi 
v (C\Omega +D)^{-1} C v^t}
\la{transE} 
\ee 
(cf. \cite{F2}, Eq. (1.20)); here $v=(\int_x^y\omega_1,\dots,\int_x^y\omega_g)$. 
For the expression 
$${\mathcal C}(x)=\frac{1}{W(x)}
\left.\left(\sum_{i=1}^g\omega_i(x)\frac{\partial}{\partial v_i}\right)^g
\theta(v;\Omega)\right|_{v=K^x}$$
it is shown in \cite{F2}, Eq. (1.23), that 
\be
{\mathcal C}'(x)=\delta ({\rm det}(C{\Omega} +D))^{3/2} \, e^{\sqrt{-1}\pi K^x (C{\Omega}+D)^{-1}C (K^x)^t}\,  {\mathcal C}(x)\;,
\la{transC}
\ee
where $\delta$ is a root of unity of eighth degree, and $K^x$ is the vector of Riemann constants (\ref{rc}). 
Substituting these formulae into (\ref{taukk}), we obtain the statement the lemma.
\end{proof}

Denote by $\lambda$ the Hodge line bundle on the Hurwitz space $\H_{g,d}$; the fiber of $\lambda$
over the point represented by a pair $(C, f)$ is isomorphic to $\det \Omega^1_C=\wedge^g \Omega^1_C$, 
where $\Omega^1_C$ is the space of holomorphic
1-forms (abelian differentials) on $C$. The line bundle $\lambda$ has a local holomorphic section given by 
$\omega_1\wedge\dots\wedge\omega_g$, where $\omega_1,\dots,\omega_g$ is the basis of normalized abelian differentials
on a Torelli marked curve $C^\alpha$. Under the change of marking $\alpha'=\sigma\alpha$ with $\sigma\in Sp(2g,\Z)$ given by
(\ref{symtrans}), this section transforms by the rule 
$\omega'_1\wedge\dots\wedge\omega'_g=\det(C\Omega+D)^{-1}\, \omega_1\wedge\dots\wedge\omega_g$. 
Combining this with Lemmas 1 and 4 we obtain

\begin{lemma}
The function $\check{\eta}=\tau^{n-1}V^{-6}$ on $\Hc_{g,d}$ descends to a nowhere vanishing holomorphic
section $\eta$ of the line bundle $\lambda^{24(n-1)}$ on $\H_{g,d}$.
\end{lemma}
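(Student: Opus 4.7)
The plan is to combine three ingredients already assembled in the preceding discussion: the holomorphy, nonvanishing and $PSL(2,\C)$-invariance of $\check{\eta}$ on $\Hc_{g,d}$ from Lemma 1, the transformation law for $\tau$ under change of Torelli marking from Lemma 4, and the modular transformation of the Hodge frame $\omega_1\wedge\dots\wedge\omega_g$ recorded immediately before the lemma.

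First I would observe that the branch points $z_1,\dots,z_n$ live in $\Pl$ and are manifestly independent of the Torelli marking, so the Vandermonde $V(z_1,\dots,z_n)$ is $Sp(2g,\Z)$-invariant as a function on $\Hc_{g,d}$. Raising (\ref{tautaut}) to the $(n-1)$st power therefore gives
\begin{equation*}
\check{\eta}(C^{\sigma\alpha},f)=\det(C\Omega+D)^{24(n-1)}\,\check{\eta}(C^\alpha,f).
\end{equation*}
Since the local frame $\omega_1\wedge\dots\wedge\omega_g$ of $\lambda$ transforms by the reciprocal factor $\det(C\Omega+D)^{-1}$, its $24(n-1)$st tensor power transforms with weight opposite to $\check{\eta}$, and the product
\begin{equation*}
\eta:=\check{\eta}\cdot(\omega_1\wedge\dots\wedge\omega_g)^{24(n-1)}
\end{equation*}
is $Sp(2g,\Z)$-invariant. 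It therefore defines a section of $\lambda^{24(n-1)}$ on $\Ht_{g,d}=\Hc_{g,d}/Sp(2g,\Z)$.

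For the further descent to $\H_{g,d}=\Ht_{g,d}/PSL(2,\C)$, I would note that $\gamma\in PSL(2,\C)$ acts on a pair $(C,f)$ only by post-composition with $f$ and leaves the curve $C$, its period matrix and its normalized basis of holomorphic differentials untouched; consequently the frame $(\omega_1\wedge\dots\wedge\omega_g)^{24(n-1)}$ is $PSL(2,\C)$-invariant, and combining with the $PSL(2,\C)$-invariance of $\check{\eta}$ from Lemma 1 we conclude that $\eta$ descends to a section of $\lambda^{24(n-1)}$ on $\H_{g,d}$. Holomorphy and nonvanishing are inherited directly from $\check{\eta}$ (Lemma 1) and from the fact that $\omega_1\wedge\dots\wedge\omega_g$ is a nowhere vanishing local frame of $\lambda$.

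The argument is essentially bookkeeping, so no step presents a substantive obstacle; the one point that truly needs checking is that the modular weights on the two factors cancel. This is precisely the reason why the auxiliary normalization (\ref{eta}) was arranged with exponent $n-1$ on $\tau$ and $-6$ on $V$: it produces exactly the factor $\det(C\Omega+D)^{24(n-1)}$ that is absorbed by the $24(n-1)$th power of the Hodge frame.
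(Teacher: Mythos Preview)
Your argument is correct and follows exactly the approach of the paper, which simply states that the lemma is obtained by combining Lemma 1, Lemma 4, and the transformation law for the Hodge frame $\omega_1\wedge\dots\wedge\omega_g$. You have merely spelled out the bookkeeping that the paper leaves implicit.
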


\section{Divisor of the tau function}

\subsection{The space of admissible covers}

The space of admissible covers $\Hb_{g,d}$ is a natural compactification of the Hurwitz space $\H_{g,d}$
that was introduced in \cite{HMu}. An {\em admissible cover}
is a degree $d$ regular map $f:C\rightarrow R$ of a connected genus $g$ nodal curve $C$ onto a rational nodal curve $R$ 
that is simply branched over $n=2g+2d-2$ distinct points on the smooth part of $R$ and maps nodes to nodes with the same 
ramification indices for the two branches at each node. The space of (weak equivalence classes of) 
admissible covers $\Hb_{g,d}$ has relatively simple local structure, though it is not a normal algebraic variety and
therefore not an orbifold. However, a normalization of $\Hb_{g,d}$ is smooth, cf. \cite{ACV}, \cite{I}.

The space $\Hb_{g,d}$ comes with two natural morphisms. The first one is the {\em branch map}
\be
\beta: \Hb_{g,d} \rightarrow \overline{{\mathcal M}}_{0,n}\;,\label{bm}
\ee
that extends the natural covering $\H_{g,d} \rightarrow {\mathcal M}_{0,n}$ that maps $f$ to the configuration
$(z_1,\dots, z_n)$ of its ordered branch points considered up to the diagonal action of $PSL(2,\C)$.
The second one is the {\em forgetful map} 
\be
\pi:\Hb_{g,d} \rightarrow \overline{{\mathcal M}}_g\;,\label{fm}
\ee
that extends the natural projection $\H_{g,d} \rightarrow {\mathcal M}_g$ sending the equivalence class of the
branched cover $f:C\rightarrow \Pl$ to the isomorphism class of the covering curve $C$.

The description of the boundary $\Hb_{g,d}-\H_{g,d}$ is straightforward. Since we are interested only in the boundary
divisors, it is sufficient to consider admissible covers over the base $R$ consisting of two irreducible components
$R_1$ and $R_2$ intersecting at a single node $p$. The ramification type of the cover $f:C\rightarrow R$ over the node $p$
we will denote by $\mu=[m_1,\dots, m_r]$, where $r$ is the number of nodes of $C$ and $m_i$ is the ramification
index at the $i$-th node, $m_1+\dots +m_r=d$. Let us denote by $k$ and $n-k$ the number of branch points on
$R_1\setminus\{p\}$ and $R_2\setminus\{p\}$ respectively; we assume that $2\leq k\leq g+d-1$. Let $D_k$ be the divisor in 
$\overline{\mathcal M}_{0,n}$ parametrizing reducible curves with components of type $(0,k+1)$ and $(0,n-k+1)$,
and denote by $\Delta_k=\beta^{-1}(D_k)$ the preimage of $D_k$ in $\Hb_{g,d}$ with respect to the branch map (\ref{bm}).
The boundary divisor $\Delta_k$ is the union of divisors $\Delta_\mu^{(k)}$ over the set of all possible ramification types
$\mu$ over the node $p\in R$. Note that
the divisors $\Delta_\mu^{(k)}$ are generally reducible even for a fixed partition of branch points on $R$ and a fixed $\mu$.

The local structure of $\Hb_{g,d}$ near the divisors $\Delta_\mu^{(k)}$ was described in \cite{I}:
in the direction transversal to $\Delta_\mu^{(k)}$ with $\mu=[m_1,\dots,m_r],$ it looks like the (singular) curve
$$\zeta_1^{m_1}=\dots =\zeta_r^{m_r}$$
near the origin in $\C^r$. Therefore, for any irreducible component of $\Delta_\mu^{(k)}$ there are 
$\f{m_1\dots m_r}{m}$ (where $m={\rm l.c.m.}\{m_1,\dots,m_r\}$ is the least common multiple of $m_1,\dots,m_r$)
branches of $\Hb_{g,d}$ intersecting at it, whereas every such branch is an $m$-fold cover
of a neighborhood of $D_k$ in $\overline{\mathcal M}_{0,n}$ ramified over $D_k$ with ramification index $m$.

\subsection{Asymptotics of the tau function near the boundary}

Let $f:C\rightarrow\Pl$ be a holomorphic branched cover with only simple branch points $z_1,\dots, z_n\in\Pl,\;n=2g+2d-2$, and 
let $\gamma_i\mapsto s_i$ be the monodromy reprsentation, where $\gamma_i$ are non-intersecting simple loops about $z_i$
with some base point $z_0$,
and $s_1,\dots, s_n$ are transpositions in the symmetric group $S_d$ of $d$ elements such that $s_1\dots s_n=1$. 
Denote by $f_\e:C_\e\rightarrow\Pl$ the branched cover
with branch points $\e z_1,\dots, \e z_k, z_{k+1},\dots, z_n\in\Pl$ and the same monodromy as $f$, where we assume that 
$z_i\neq\infty$ for $i=1,\dots, k$ and $z_i\neq 0$ for $i=k+1,\dots, n$ ($2\leq k\leq g+d-1$ as above). At the limit $\e\rightarrow 0$ 
the map $f$ approaches an 
admissible cover $f_0: C_0\rightarrow R$, where $C_0$ is a genus $g$ nodal curve, and $R=\Pl_{(1)}\cup\Pl_{(2)}/\{\infty,0\}$ 
is the two component rational curve with one node $p=\{\infty, 0\}$ ($\infty\in\Pl_{(1)}$ is identified with $0\in\Pl_{(2)}$). The curve 
$C_0$ splits into two (not necessarily connected)
components $C_0^{(1)}$ and $C_0^{(2)}$ lying over $\Pl_{(1)}$ and $\Pl_{(2)}$ respectively. The restriction 
$f_0^{(1)}:C_0^{(1)}\rightarrow \Pl_{(1)}$ (resp. $f_0^{(2))}:C_0^{(2)}\rightarrow \Pl_{(2)}$) is simply branched over 
$z_1,\dots, z_k\in\Pl_{(1)}$
(resp. over $z_{k+1},\dots, z_n\in\Pl_{(2)}$).\footnote{This is because the functions $f_\e$ and $\e^{-1} f_\e$ represent the same point
in the Hurwitz space $\H_{g,n}$.} Moreover, $C_0^{(1)}$ (resp. $C_0^{(2)}$) is connected if and only if the group generated 
by $s_1,\dots, s_k$ (resp. by $s_{k+1},\dots, s_n$) acts transitively on the set of $d$ elements. The ramification type over the
node $p$ coincides with the type of the permutation $s_1\dots s_k\in S_d$ and, as above, we denote it by $\mu=[m_1,\dots, m_r]$.

We will need a canonical homology basis for the family of curves $C_\e$ that is compatible with the limiting nodal curve $C_0$.
Denote by $\ell$ the simple loop on $\Pl$ that shrinks to the node as $\e\rightarrow 0$, and by $\ell_1,\dots, \ell_r$
its preimages in $C_\e$ (we omit the dependence of these loops on the parameter $\e$). Choose some canonical bases 
$\alpha_1$ and $\alpha_2$ on the curves $C_0^{(1)}$ and $C_0^{(2)}$ respectively; we can pull them back to $C_\e$ in such 
a way, that they do not intersect the loops $\ell_1,\dots, \ell_r$. Denote by $[\ell_i]\in H^1(C_\e)$ the homology class of the loop
$\ell_i$, and put $q={\rm rank}\{[\ell_1],\dots, [\ell_r]\}$, that is, the rank of the linear span of the classes $[\ell_1],\dots, [\ell_r]$ in $H^1(C_\e)$.
An elementary topological consideration shows that $g=g_1+g_2+q$, where $g_1$ (resp. $g_2$) is the sum of genera of the
connected components of $C_0^{(1)}$ (resp. $C_0^{(2)}$). Without loss of generality, we can  
assume that $[\ell_1],\dots,[\ell_q]$ are linear
independent, and add $\ell_1,\dots,\ell_q$ to the union of $\alpha_1$ and $\alpha_2$ as $a$-cycles, while the corresponding 
$b$-cycles can be chosen as lifts of paths connecting branch points in different components of $\Pl-\ell$. 
We denote thus obtained basis on $C_\e$ by $\alpha$.

The main technical result of this paper is
\begin{theorem}
The isomonodromic tau function has the asymptotics
\be
\tau(C_\e^\alpha, f_\e)=\e^{3k-2d+2\sum_{i=1}^r 1/m_i}\; \tau(C_0^{(1),\alpha_1}, f_0^{(1)})\;
\tau(C_0^{(2),\alpha_2}, f_0^{(2)})\,(1+o(1)),
\la{astau}
\ee
as $\e\rightarrow 0$, where the tau function for a disconnected branched cover is understood as the product of tau functions for 
its connected components.
\end{theorem}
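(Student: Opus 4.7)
The plan is to substitute both sides of (\ref{astau}) into the explicit expression (\ref{taukk}) and track the degeneration of every ingredient as $\epsilon \to 0$. Geometrically, as the first $k$ branch points coalesce, $C_\epsilon$ pinches along the $r$ loops $\ell_1, \ldots, \ell_r$ lying above the shrinking loop $\ell \subset \Pl$; the basis $\alpha$ is chosen so that $\ell_1, \ldots, \ell_q$ are non-separating $a$-cycles whose dual $b$-cycles escape the pinching region, while $\ell_{q+1}, \ldots, \ell_r$ are separating. The pinching at the $i$-th node is $m_i$-fold ramified, so the natural plumbing parameter there scales like $\epsilon^{1/m_i}$, matching the local model $\zeta_1^{m_1}=\dots=\zeta_r^{m_r}$ recalled in Section 3.1.

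First, I would apply Fay's plumbing formulas for the period matrix, theta function, prime form and Riemann constants. The period matrix $\Omega$ of $C_\epsilon$ decomposes, after reordering the basis, into a $q\times q$ block with diagonal entries of the form $\frac{1}{2\pi\sqrt{-1}}\log \epsilon^{1/m_i}+O(1)$ plus two blocks tending to the period matrices $\Omega_1,\Omega_2$ of $C_0^{(1)},C_0^{(2)}$. The normalized differentials associated to $\ell_1,\dots,\ell_q$ degenerate to normalized third-kind differentials on $C_0^{(1)}\sqcup C_0^{(2)}$ with simple poles at the preimages of the node, while the others tend to normalized holomorphic differentials on the components. The theta function and prime form then factorize into products over the two components, modulo explicit $\epsilon$-dependent prefactors coming from the pinching series.

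Second, I would analyze the natural local parameters appearing in (\ref{taukk}). On the first component the distinguished coordinate $\zeta=f_\epsilon$ equals $\epsilon\, f_0^{(1)}$ to leading order, so each occurrence of $\zeta$ in $E(\zeta,p_k)$ and in the Wronskian $W(\zeta)$ injects an explicit power of $\epsilon$. At the critical points $x_i$ lying over $\epsilon z_i$ with $i\leq k$, the parameter $\sqrt{f_\epsilon - \epsilon z_i}=\sqrt{\epsilon}\sqrt{f_0^{(1)}-z_i}$ contributes $\epsilon^{1/2}$; at the preimages of the node inside $\Pl_{(1)}$, the parameter $(f_\epsilon)^{-1/(m_i+1)}$ rescales by $\epsilon^{-1/(m_i+1)}$. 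Each rescaling inserts a determinate power of $\epsilon$ into the corresponding prime-form factor, together with analogous but opposite-sign contributions on the second component.

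Finally, I would assemble the contributions. The transcendental pieces --- the theta derivative evaluated at $K^\zeta$, the Wronskian, and the products $\prod E(p_k,p_l)^{4d_kd_l}$ and $\prod E(\zeta,p_k)^{-8(g-1)d_k}$ --- recombine into the product of the analogous expressions for $C_0^{(1)}$ and $C_0^{(2)}$, yielding $\tau(C_0^{(1),\alpha_1},f_0^{(1)})\,\tau(C_0^{(2),\alpha_2},f_0^{(2)})$. The accumulated powers of $\epsilon$ can be fixed, and cross-checked, by the homogeneity Lemma 2: applied to $f_0^{(1)}$ with ramification profile $\mu$ at infinity and $n(\mu)=k$ finite simple branch points (which follows from Riemann--Hurwitz), it predicts exactly $\epsilon^{3k-2d+2\sum_i 1/m_i}$, matching the claim. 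The hard part will be the bookkeeping at each of the $r$ nodes of ramification $m_i$: one must adapt Fay's single-node plumbing formulas to the multi-sheeted pinching $\zeta_1^{m_1}=\cdots=\zeta_r^{m_r}$, handle separating and non-separating cycles simultaneously, and show that the half-period shifts in $K^\zeta$ coming from each smoothed node cancel so that the degenerated theta-and-prime-form expression factorizes cleanly across the two components.
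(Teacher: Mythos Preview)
Your approach is in principle valid but takes a genuinely different and far more laborious route than the paper. You propose to degenerate the explicit formula (\ref{taukk}) term by term---period matrix, theta function, prime forms, Riemann constants, Wronskian---under the multi-node pinching, and then collect the resulting powers of~$\epsilon$. The paper instead never touches the explicit formula: it works entirely with the \emph{defining} differential equations (\ref{tau}). The key observation is that the coefficients of the Bergman connection $d_B$, namely $\mathrm{Res}_{x_i}\frac{S_B-S_f}{df}$, are controlled by the Bergman bidifferential~$B_\epsilon$, whose degeneration to $B^{(1)}\sqcup B^{(2)}$ is a one-line consequence of the vanishing of the $a$-periods along the shrinking cycles (Lemma~6 and its Corollary). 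This immediately gives $\tau(C_\epsilon^\alpha,f_\epsilon)=c(\epsilon)\,\tau^{(1)}\tau^{(2)}(1+o(1))$ with $c(\epsilon)$ independent of $z_1,\dots,z_n$; the exponent of $\epsilon$ is then read off from $\epsilon\,\partial_\epsilon\log\tau=-4\sum_{i\le k} z_i\,\mathrm{Res}_{x_i}\frac{S_B-S_f}{df}$ together with the Euler identity of Lemma~3, applied to the limiting cover $f_0^{(1)}$.

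What each approach buys: the paper's argument is short and robust---it needs only the crudest information about $B_\epsilon$ (convergence, no rates) and uses the homogeneity lemma not as a cross-check but as the actual computation of the exponent. Your approach, if carried out, would yield finer information (the full plumbing expansion of $\tau$, not just the leading term), but the ``hard part'' you flag---extending Fay's single-cycle plumbing to $r$ simultaneous ramified nodes and verifying that the half-period shifts in $K^\zeta$ and the cross-terms in $\prod E(p_k,p_l)^{4d_kd_l}$ cancel---is a substantial project in its own right, and is entirely bypassed by the paper's method.
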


To prove this theorem we will need an auxiliary lemma. Together with $f_{\e}:C_{\e}^\alpha\rightarrow\Pl$
consider the branched cover $f_{\e}/\e:C_{\e}^\alpha\rightarrow\Pl$  with 
branch points $ z_1,\dots, z_k, \e^{-1}z_{k+1},\dots, \e^{-1}z_n\in\Pl$ and the same monodromy as $f$.  Denote the Bergman bidifferentials on the Torelli marked curves $C_{\e}^\alpha$, $C_0^{(1),\alpha_1}$ and $C_0^{(2),\alpha_2}$  by $B_{\e}$,
$B^{(1)}$ and $B^{(2)}$ respectively. 

We want to see what happens at the limit $\e\to 0$. 
We can always assume that  $|z_i|<1/\delta,\;i=1,\dots, k$, and 
$|z_i|>\delta,\;i=k+1,\dots, n$, for some $\delta\in (0,1)$. For small enough $\e$
consider two open subsets $D_\e^{(1)}= \left\{x\in C_\e\left| \,|f_\e(x)|<\e/\delta\right.\right\}$ and  
$D_\e^{(2)}= \left\{x\in C_\e\left| \,|f_\e(x)|>\delta\right.\right\}$ of the curve $C_\e$.
Note that the complement $C_\e-D_\e^{(1)}\cup D_\e^{(2)}$ is the union of $r$ disjoint cylinders
around the loops $\ell_1,\dots,\ell_r$. For each $\e$ the subset $D_\e^{(1)}$ (resp. $D_\e^{(2)}$)   
is naturally isomorphic to the subset 
$D_0^{(1)}=\left\{x\in C_0^{(1)}\left|\, |f_0^{(1)}(x)|<\f{1}{\delta}\right.\right\}$
(resp. $D_0^{(2)}=\left\{x\in C_0^{(2)}\left|\, |f_0^{(2)}(x)|>\delta\right.\right\}$).
As $\e \to 0$, we have
$$
f_\e(x)/\e\to f_0^{(1)}(x),\;\;\; x\in D_0^{(1)}
$$
and
$$
f_\e(x)\to f_0^{(2)}(x),\;\;\; x\in D_0^{(2)}\;.
$$

\begin{lemma}
In the limit $\e\to 0$ 
$$\f{B_\e(x,y)}{df_\e(x) \,df_\e(y)}\longrightarrow\f{B_{(1)}(x,y)}{df_0^{(1)}(x)\,df_0^{(1)}(y)}\;,\quad x,y\in D_0^{(1)},$$
and
$$\e^2\;\f{\,B_\e(x,y)}{df_\e(x) \,df_\e(y)}\longrightarrow\f{B_{(2)}(x,y)}{df_0^{(2)}(x)\,df_0^{(2)}(y)}\;,\quad x,y\in D_0^{(2)}$$
uniformly on $D_0^{(1)}$ and $D_0^{(2)}$ whenever $x\neq y$.
\end{lemma}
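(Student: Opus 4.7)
My plan is to use the uniqueness characterization of $B_\e$ on $C_\e^\alpha$ as the symmetric meromorphic bidifferential with double pole of biresidue $1$ on the diagonal and vanishing $a$-periods with respect to $\alpha$, combined with a normal-families argument. First, I introduce scaled local coordinates adapted to the limit: on $D_\e^{(1)}$ use $u_\e:=f_\e/\e$, which under the natural identification $\iota_\e^{(1)}:D_\e^{(1)}\to D_0^{(1)}$ converges uniformly on compact sets to $u:=f_0^{(1)}$; on $D_\e^{(2)}$ use $v_\e:=f_\e$, converging to $v:=f_0^{(2)}$. The factor $\e^2$ appearing only in the second claim reflects the different scalings $df_\e=\e\,du_\e$ on $D_\e^{(1)}$ versus $df_\e=dv_\e$ on $D_\e^{(2)}$.

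The main analytic input is the degeneration theory of normalized abelian differentials and prime forms for a family of smooth curves degenerating along vanishing cycles chosen as $a$-cycles. With this choice of marking, the normalized differentials $\omega_j^\e$ on $C_\e^\alpha$ indexed by $\alpha_i$ converge, on $D_\e^{(i)}$ and via $\iota_\e^{(i)}$, uniformly on compact subsets to the normalized differentials $\omega_j^{(i)}$ on $C_0^{(i),\alpha_i}$, while those indexed by the pinching cycles $\ell_k$ become concentrated in the pinching annuli. The prime form $E_\e$ admits a plumbing expansion in parameters proportional to $\e^{m_k}$ at a node of ramification index $m_k$. Substituting these asymptotics into $B_\e=d_xd_y\log E_\e$ yields explicit asymptotic expressions for $B_\e$ in the scaled coordinates, from which one deduces that the families
\[
H_\e^{(1)}(x,y):=\frac{B_\e(x,y)}{df_\e(x)\,df_\e(y)},\qquad H_\e^{(2)}(x,y):=\e^2\,\frac{B_\e(x,y)}{df_\e(x)\,df_\e(y)},
\]
viewed via the natural identifications as meromorphic functions on $D_0^{(i)}\times D_0^{(i)}$, are locally uniformly bounded on compact subsets off the diagonal.

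By Montel's theorem, every sequence $\e_n\to 0$ admits a subsequence along which $H_{\e_n}^{(i)}$ converges locally uniformly off the diagonal to a meromorphic function $H_\infty^{(i)}$. The bidifferential $H_\infty^{(i)}\,df_0^{(i)}df_0^{(i)}$ then inherits (a) the double pole of biresidue $1$ on the diagonal from the universal local form of $B_\e$; (b) vanishing $a$-periods along the cycles of $\alpha_i$, by continuity of the periods along these fixed cycles, which lie in compact subsets of $D_0^{(i)}$; and (c) a holomorphic extension across the punctures of $D_0^{(i)}$ at the preimages of the nodes in $C_0^{(i)}$, which follows from the vanishing of the $a$-periods of $B_\e$ along the pinching cycles $\ell_k$ (in the limit, this precludes residue contributions from developing at the punctures). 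Uniqueness of the Bergman bidifferential on $C_0^{(i),\alpha_i}$ then forces $H_\infty^{(i)}\,df_0^{(i)}df_0^{(i)}=B^{(i)}$, so every subsequential limit coincides and the full family converges. I expect the most delicate step to be (c), where the plumbing asymptotics in coordinates $\xi_k^{m_k}=f_\e-z_k^{(\e)}$ adapted to the ramification index $m_k$ must be tracked with care.
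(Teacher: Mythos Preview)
Your proposal is correct and follows essentially the same route as the paper, which simply notes that the $a$-periods of $B_\e$ along the cycles coming from $\alpha_1\cup\alpha_2$ vanish identically and those along the vanishing cycles $\ell_1,\dots,\ell_r$ tend to zero, and then cites Fay's Corollary~3.8 (pinching of a single loop) for the rest. Your normal-families argument combined with the uniqueness characterization of $B^{(i)}$ is precisely how one would spell out and extend Fay's argument to several simultaneous vanishing cycles with ramification; your step~(c) is the only genuinely new ingredient beyond Fay, and your outline of it is sound.
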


\begin{remark}
{\rm This lemma extends \cite{F}, Corollary 3.8, that treats the pinching of a single non-separating loop.}
\end{remark}

\begin{proof} 
According to our choice of the homology basis $\alpha$ on $C_\e$, the integrals of $B_\e$ along $a$-cycles coming from $C_0^{(1)}$ and $C_0^{(2)}$ are identically 0. Moreover, the integrals of $B_\e$ along the $r$ vanishing cycles 
$\ell_1,\dots, \ell_r$ tend to $0$ as $\e\to 0$. Therefore, repeating the argument of \cite{F}, Corollary 3.8, we see that 
the bidifferential $B_\e$ tends to $B^{(1)}$ on $D_0^{(1)}$ and to $B_0^{(2)}$ on $D_0^{(2)}$, as stated.
\end{proof}

Denote by $S_{B_\e}$, $S_{B^{(1)}}$ and $S_{B^{(2)}}$ the projective connections corresponding to the bidifferentials
$B_{\e}$, $B^{(1)}$ and $B^{(2)}$ respectively. From the above lemma we immediately get

\begin{corollary}\la{limSB}
The coefficients of the Bergman projective connection (\ref{Bercon}) have the following asymtotics as $\e\to 0$:
\begin{eqnarray}
\f{S_{B_\e}(x)- S_{f_\e}(x)}{df_\e(x)^2}&\longrightarrow &\f{S_{B^{(1)}}(x)- S_{f_0^{(1)}}(x)}{d f_0^{(1)}(x)^2}\;,\quad x\in D_0^{(1)}
\la{SB1}\\
\e^2\;\f{S_{B_\e}(x)- S_{f_\e/\e}(x)}{df_\e(x)^2}&\longrightarrow &
\f{S_{B^{(2)}}(x)- S_{f_0^{(2)}}(x)}{df_0^{(2)}(x)^2}\;,\quad\quad x\in D_0^{(2)}\;.
\la{SB2}
\end{eqnarray}
\end{corollary}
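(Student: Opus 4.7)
The plan is to read off the function $(S_B-S_f)/df^2$ from the regular part of $B/(df\,df)$ on the diagonal, and then pass to the $\e\to 0$ limit using Lemma 7.

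Substituting the local parameter $\zeta=f$ into the Laurent expansion (\ref{asW}) and using that the Schwarzian of $f$ with respect to itself vanishes identically, I would first establish the identity
\begin{equation*}
\frac{S_B-S_f}{df^2}(x)\,=\,6\,\lim_{y\to x}\left[\,\frac{B(x,y)}{df(x)\,df(y)}-\frac{1}{(f(x)-f(y))^2}\,\right]
\end{equation*}
valid at every point where $df\neq 0$. Applied on the one hand to $(B_\e,f_\e)$ on $C_\e$ and on the other hand to $(B^{(i)},f_0^{(i)})$ on $C_0^{(i)}$, this reduces both (\ref{SB1}) and (\ref{SB2}) to the convergence, as $\e\to 0$, of the bracketed holomorphic functions of $y$, evaluated at $y=x$.

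Pointwise convergence of the bracket for $y\neq x$ is furnished by Lemma 7 together with the convergence of $f_\e$ on each $D_0^{(i)}$ recalled just before Lemma 7; the $\e^2$ prefactor in (\ref{SB2}) is inherited directly from the analogous factor in Lemma 7 on $D_0^{(2)}$. The main obstacle is then the exchange of the limits $\e\to 0$ and $y\to x$, and I would handle this by Vitali's theorem on normal families. Each member of the bracketed $\e$-family extends holomorphically across $y=x$, since the two double poles cancel exactly (the biresidue of any Bergman bidifferential is canonical and equal to one), and local uniform boundedness in $y$ follows from Lemma 7 together with the explicit form of the subtracted singular term; uniform convergence on compact neighborhoods of $x$ then forces convergence of the value at $y=x$, which is precisely (\ref{SB1}) and (\ref{SB2}). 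No delicate analytic estimate is required beyond this normal-families argument.
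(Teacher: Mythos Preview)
Your argument is correct and spells out precisely what the paper leaves implicit: the corollary is stated there as an immediate consequence of Lemma~7 with no further proof, and reading $(S_B-S_f)/df^2$ as six times the regularized diagonal value of $B/(df\,df)$, followed by a normal-families argument to exchange the limits $\e\to 0$ and $y\to x$, is exactly the natural way to make that passage rigorous.
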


{\it Proof of Theorem 2.} 
Denote by $x_1^\e,\dots,x_n^\e\in C_\e$ the ramification points corresponding to the simple branch points
$\e z_1,\dots,\e z_k, z_{k+1},\dots, z_n\in\Pl$. 
By definition of $\tau(C_\e^\alpha, f_\e)$, cf. (\ref{tau}), we have
\begin{eqnarray}
\frac{\del}{\del (\e z_i)}\log\tau(C_\e^\alpha, f_\e)&=&  -4 \,{\rm Res}\,_{x_i^\e}\f{S_B^\e -S_{f_\e}}{df_\e}\;,\quad i=1,\dots, k,
\la{dertauxj}\\
\frac{\del}{\del z_i}\log\tau(C_\e^\alpha, f_\e)&=& -4 \,{\rm Res}\,_{x_i^\e}\f{S_B^\e -S_{f_\e}}{df_\e}\;,\quad i=k+1,\dots, n.
\la{dertauxj2}
\end{eqnarray}
From (\ref{dertauxj}) we see that for  $i=1,\dots, k$
$$
\frac{\del}{\del z_i}\log\tau(C_\e^\alpha, f_\e)=-4\,
{\rm Res}\,_{x_i^\e}\f{S_B^{\e} -S_{f^{\e}/\e}}{df_\e/\e}\;.
$$
Now  Corollary \ref{limSB} implies that, as $\e\to 0$,
\be
\tau(C_\e^\alpha, f_\e)=c(\e)
\; \tau(C_0^{(1),\alpha_1}, f_0^{(1)})\; \tau(C_0^{(2),\alpha_2}, f_0^{(2)})(1+o(1))
\ee
where 
$c(\e)$ is a function of $\e$ independent of $z_1,\dots,z_n$.
To explicitly compute $c(\e)$ we use Eq. (\ref{dertauxj}):
\be
\e\f{\del}{\del\e}\log\tau(C_\e^\alpha, f_\e)= -4\,\sum_{i=1}^{k}  z_i \, {\rm Res}\,_{x_i^\e}\f{S_B^\e -S_{f_\e}}{df_\e}\;.
\ee
From (\ref{SB1}) we get 
\be
\lim_{\e\to 0}\left(\e \f{\del}{\del\e}\log\tau(C_\e^\alpha, f_\e)\right)=
-4\sum_{i=1}^{k}  z_i\, {\rm Res}\,_{x_i}\f{S_B^{(1)} -S_{f_0^{(1)}}}{df_0^{(1)}}\;,
\ee
where the right-hand side is evaluated on the cover $f_0^{(1)}:C_0^{(1)}\rightarrow \Pl_{(1)}$.
Due to (\ref{Euler1}) we can rewrite the right hand side of the last formula in terms of  $k$, $d$ and the ramification type
$\mu=[m_1,\dots, m_r]$ over the node at $\infty\in\Pl_{(1)}$ as follows:
\be
\lim_{\e\to 0}\left(\e\f{\del}{\del\e}\log\tau(C_\e^\alpha, f_\e)\right) = 3 k - 2d + 2\sum_{i=1}^r \f{1}{m_i}.
\ee
Thus, 
$c(\e)=\e^{3k-2d-2\sum_{i=1}^r 1/m_i},$
which yields (\ref{astau}).

\begin{remark}
{\rm Asymptotic behavior of the tau function as $\epsilon \to 0$ can in principle be
derived from Theorem 2.4.13 and Eq.(2.4.9) of \cite{SMJ}, where it was described in
terms of traces of squares of the residues of the associated Fuchsian system in
a rather general situation. However, our approach is more straightforward and 
suits better for the situation we consider here.}
\end{remark}

\begin{corollary}
The (meromorphic) section ${\eta}$  of the line bundle $\lambda^{24(n-1)}$ on $\Hb_{g,d}$ (with $\lambda$ being the pullback
of the Hodge line bundle on $\overline{\mathcal M}_g$) has the following asymptotics as $\epsilon\to 0$:
\begin{eqnarray}
\eta(C_\e^\alpha, f_\e)&=&\e^{3 k (n-k)-2(n-1)(d-\sum_{i=1}^r {1/m_i})}\nonumber\\
&\times&{\eta}(C_0^{(1),\alpha_1}, f_0^{(1)})\; {\eta}(C_0^{(2),\alpha_2}, f_0^{(2)})(1+o(1)).\label{etaa}
\end{eqnarray}
\end{corollary}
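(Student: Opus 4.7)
The plan is to deduce the asymptotics of $\eta$ directly from the definition $\check{\eta}=\tau^{n-1}V^{-6}$ (Lemma 5) together with Theorem 2, by computing separately the leading $\epsilon$-behavior of $\tau^{n-1}$ and of $V^{-6}$ and then combining the exponents. No new analytic input is needed: the entire argument is bookkeeping plus one elementary algebraic identity.

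First I would raise the asymptotic of Theorem 2 to the power $n-1$, obtaining
\[
\tau(C_\e^\alpha,f_\e)^{n-1}=\e^{(n-1)(3k-2d+2\sum_{i=1}^r 1/m_i)}\,\tau(C_0^{(1),\alpha_1},f_0^{(1)})^{n-1}\,\tau(C_0^{(2),\alpha_2},f_0^{(2)})^{n-1}\bigl(1+o(1)\bigr).
\]
Next I would expand the Vandermonde of the branch points $\e z_1,\dots,\e z_k,z_{k+1},\dots,z_n$ by splitting the product $\prod_{i<j}$ according to whether both indices lie in $\{1,\dots,k\}$, in $\{k+1,\dots,n\}$, or are mixed. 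The first group contributes $\e^{\binom{k}{2}}V_1$ with $V_1=\prod_{i<j\le k}(z_i-z_j)$, the second contributes $V_2=\prod_{k<i<j}(z_i-z_j)$, and the mixed group has a finite nonzero limit $\prod_{i\le k<j}(-z_j)$. Thus $V^{-6}\sim\e^{-3k(k-1)}V_1^{-6}V_2^{-6}\cdot(\text{finite})$.

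Multiplying the two expansions, the $\e$-exponent is $(n-1)(3k-2d+2\sum 1/m_i)-3k(k-1)$, and the key identity
\[
(n-1)\cdot 3k-3k(k-1)=3k(n-k)
\]
yields precisely the advertised exponent $3k(n-k)-2(n-1)(d-\sum_{i=1}^r 1/m_i)$. The remaining $z$-dependent finite factors coming from the mixed Vandermonde product are absorbed into the definitions of $\eta(C_0^{(1),\alpha_1},f_0^{(1)})$ and $\eta(C_0^{(2),\alpha_2},f_0^{(2)})$; this is legitimate because $\eta$ is $PSL(2,\C)$-invariant by Lemma 1, so we are free to choose affine coordinates on each rational component $R_1,R_2$ of the limiting base $R$ before evaluating.

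The only step that is not literally automatic is this last identification of the limit factor with the boundary values of $\eta$. The cleanest way to handle it, which I would use, is to apply the $PSL(2,\C)$-invariance of $\check{\eta}$ to replace $f_\e$ by $f_\e/\e$ on the neighborhood of the second component (as is already done in the proof of Theorem 2), so that the second component's branch points acquire their natural normalization and the finite cross-terms in $V$ collapse into the Vandermonde $V_2$ of the rescaled branch points. After this rewriting the asymptotic factors cleanly into $\eta_1\cdot\eta_2$, and the stated formula follows.
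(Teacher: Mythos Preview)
Your proof is correct and is precisely the computation the paper has in mind by labeling the statement a Corollary (the paper gives no argument at all). The bookkeeping of the $\e$-exponents via $(n-1)\cdot 3k-3k(k-1)=3k(n-k)$ is the whole content, and your treatment of the cross-terms in the Vandermonde via the $PSL(2,\C)$-invariance of $\check\eta$ is the right way to identify the finite nonzero limit with the product $\eta(C_0^{(1),\alpha_1},f_0^{(1)})\,\eta(C_0^{(2),\alpha_2},f_0^{(2)})$.
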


\subsection{Relations between the divisors}

Here we discuss some explicit relations between the divisor classes in the rational Picard group 
${\rm Pic}(\Hb_{g,n})\otimes \Q$ that follow from 
the above analysis. Slightly abusing notation, we use the same symbols for line bundles and divisors on $\Hb_{g,d}$ as
for their classes in ${\rm Pic}(\Hb_{g,n})\otimes \Q$. It will also be convenient to understand the boundary divisors 
$\Delta_\mu^{(k)}$ in the orbifold sense, that is, as the weighted sums
of their irreducible components with weights $\f{1}{|{\rm Aut}(f)|}$, where ${\rm Aut}(f)$ is the automorphism group of a 
generic admissible cover $f$ parametrized by the irreducible component; such a ``weighted'' divisor we denote by 
$\delta_\mu^{(k)}$. Then we have

\begin{theorem}
For the Hodge class $\lambda\in {\rm Pic}(\Hb_{g,n})\otimes \Q$ the following formula holds:
\be
\lambda=\sum_{k=2}^{g+d-1}\sum_{\mu=[m_1,\dots, m_r]} \prod_{i=1}^r m_i
\left(\f{k(n-k)}{8(n-1)}-\f{1}{12}\left(d-\sum_{i=1}^r\f{1}{m_i}\right)\right)\delta_\mu^{(k)}\,.\label{main}
\ee
\end{theorem}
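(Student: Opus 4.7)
The approach is to extract the Hodge class from the divisor of the meromorphic section $\eta$ of $\lambda^{24(n-1)}$ produced by Lemma 5 together with the Corollary in Section 3.2. Recall that, by Lemma 5, $\eta$ is holomorphic and nowhere vanishing on the open stratum $\H_{g,d}$, so any extension to $\Hb_{g,d}$ has divisor supported entirely on the boundary $\bigcup_{k,\mu}\Delta_\mu^{(k)}$. In ${\rm Pic}(\Hb_{g,d})\otimes\Q$ this gives
\begin{equation*}
24(n-1)\lambda \;=\; \sum_{k,\mu} N_{k,\mu}\, \delta_\mu^{(k)},
\end{equation*}
and the theorem reduces to identifying the coefficients $N_{k,\mu}$ with $\prod_i m_i\cdot\bigl[3k(n-k)-2(n-1)(d-\sum 1/m_i)\bigr]$, after which dividing by $24(n-1)$ produces the stated formula.

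To compute $N_{k,\mu}$ I would feed in the asymptotics provided by the Corollary in Section 3.2: for a one-parameter degeneration $f_\e$ approaching a generic admissible cover in $\Delta_\mu^{(k)}$,
\begin{equation*}
\eta(C_\e^\alpha, f_\e) \;\sim\; \e^{\kappa_{k,\mu}},\qquad \kappa_{k,\mu} := 3k(n-k) - 2(n-1)\Bigl(d - \sum_{i=1}^r \f{1}{m_i}\Bigr).
\end{equation*}
The parameter $\e$ is, up to a unit, a local parameter on $\overline{\mathcal M}_{0,n}$ transverse to $D_k$ pulled back by the branch map $\beta$. Then I would invoke the local model of Section 3.1: near $\Delta_\mu^{(k)}$ the Hurwitz space looks like $\{\zeta_1^{m_1}=\cdots=\zeta_r^{m_r}\}$, each local analytic branch being an $m$-fold cover of $\overline{\mathcal M}_{0,n}$ ramified along $D_k$ with index $m={\rm lcm}(m_1,\dots,m_r)$. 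Hence, on a local parameter $t$ of any such branch, $\e$ behaves like $t^m$, and $\eta$ vanishes along each branch of the normalization of $\Hb_{g,d}$ to order $m\,\kappa_{k,\mu}$.

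The final step is to convert this per-branch order of vanishing into the coefficient $N_{k,\mu}$ of the \emph{orbifold} divisor $\delta_\mu^{(k)}$. This requires combining three bookkeeping factors: the per-branch multiplicity $m\,\kappa_{k,\mu}$, the number $\prod_i m_i /m$ of normalization branches lying over one irreducible component of $\Delta_\mu^{(k)}$, and the weight $1/|{\rm Aut}(f)|$ appearing in the definition of $\delta_\mu^{(k)}$, where ${\rm Aut}(f)$ for a generic admissible cover in the component is generated by the cyclic rotations of the local parameters at the $r$ nodes. When these factors are assembled correctly, the per-component contribution to $\text{div}(\eta)$ in the $\delta$-basis is precisely $\prod_i m_i \cdot \kappa_{k,\mu}$, which gives the theorem.

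The main obstacle is exactly this last combinatorial step: the non-normality of $\Hb_{g,d}$ described in \cite{I} forces one to pass to the normalization, to track the ramification of $\beta$, and to account for the automorphism orbifold weight, all of which must fit together to produce a single clean factor of $\prod_i m_i$. The analytic work—the $\e$-asymptotics of $\eta$—is already contained in Theorem 2 and its Corollary; what remains is a careful translation between analytic valuations on the normalization and classes in the orbifold Picard group of $\Hb_{g,d}$.
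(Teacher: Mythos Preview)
Your proposal follows the paper's own argument essentially verbatim: the paper's proof of Theorem 3 is a two-sentence remark that one takes $t=\e^{1/m}$ as a transversal local parameter on each of the $\prod_i m_i/m$ branches of $\Hb_{g,d}$ near an irreducible component of $\Delta_\mu^{(k)}$, plugs this into the asymptotic (\ref{etaa}), and accounts for ${\rm Aut}(f)$. Your outline reproduces exactly this computation, and your identification of the exponent $\kappa_{k,\mu}$ and the branch-counting factor $\prod_i m_i=m\cdot(\prod_i m_i/m)$ is correct.

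One point to sharpen: your description of ${\rm Aut}(f)$ as ``generated by the cyclic rotations of the local parameters at the $r$ nodes'' conflates the Harris--Mumford space with the Abramovich--Corti--Vistoli stack. In the Harris--Mumford space $\Hb_{g,d}$ used here, those cyclic rotations are \emph{not} automorphisms of the admissible cover; they are precisely what is encoded in the non-normality, i.e.\ in the $\prod_i m_i/m$ local branches you already counted. For a generic admissible cover in a generic component of $\Delta_\mu^{(k)}$ one has $|{\rm Aut}(f)|=1$, so the weight in $\delta_\mu^{(k)}$ is trivial and the factor $\prod_i m_i$ in (\ref{main}) comes entirely from the branch structure. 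The ${\rm Aut}(f)$ weighting matters only for special components whose generic cover has honest automorphisms (as in the hyperelliptic example following (\ref{CH}), where a genus~0 piece with two nodes and two critical points has an automorphism of order~2). If you kept both your branch count \emph{and} an ${\rm Aut}$ factor of $\prod_i m_i$, you would overcount by $\prod_i m_i$; drop the latter and your assembly is exactly the paper's.
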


\begin{proof}
As it was mentioned in the end of Section 3.1, we can take $\e^{1/m},\,m={\rm l.c.m.}\{m_1,\dots,m_r\},$ as a transversal local parameter 
on each of the $\frac{m_1\dots m_r}{m}$ branches of $\Hb_{g,n}$ near each irreducible component of $\Delta_\mu^{(k)}$. 
Plugging it into (\ref{etaa}) and taking the action of ${\rm Aut}(f)$ into account, we prove the theorem.
\end{proof}

We finish with several comments concerning the special cases of the above theorem.

For $d=2$, Eq. (\ref{main}) takes the form
\be
\lambda=\sum_{i=1}^{[(g+1)/2]}\f{i(g+1-i)}{4g+2}\delta_{[1^2]}^{(2i)}+\sum_{j=1}^{[g/2]}\f{j(g-j)}{2g+1}\delta_{[2]}^{(2j+1)}\;.\label{CH}
\ee
This well-known formula expresses the Hodge class on the closure of the hyperelliptic locus in $\overline{{\mathcal M}}_g$ in terms 
of the boundary strata (cf. \cite{CH}, Proposition (4.7)). The only difference is that our coefficient at $\delta_{[1^2]}^{(2)}$ is
twice that of \cite{CH}. This is because the divisor $\delta_{[1^2]}^{(2)}$ parametrizes admissible covers containing
an irreducible genus 0 component with two nodes and two critical points that has a non-trivial automorphism group of order 2 
and gets contracted under the forgetful map
$\pi:\Hb_{g,2} \rightarrow \overline{{\mathcal M}}_g$. (In other words, we have $\delta_{[1^2]}^{(2)}=\f{1}{2}\pi^{-1}(\delta_0)$, where 
$\delta_0$ is the boundary divisor of irreducible curves in $\overline{{\mathcal M}}_g$.)

For $g=0$ one has $\lambda=0$, so that Eq. (\ref{main}) reads
\be
\sum_{k=2}^{d-1}\sum_{\mu=[m_1,\dots, m_r]}
\prod_{i=1}^r m_i\left(\f{k(2d-2-k)}{8(2d-3)}-\f{1}{12}\left(d-\sum_{i=1}^r\f{1}{m_i}\right)\right)\delta_\mu^{(k)}=0.\label{g=0}
\ee
Let us compare this formula with the results of \cite{LZ}. Put $\mathfrak{M}_{0,d}=\H_{0,d}/S_{2d-2}$, where the symmetric
group $S_{2d-2}$ acts by interchanging the $2d-2$ simple branch points, and denote by $\overline{\mathfrak{M}}_{0,d}$ the 
compactification of $\mathfrak{M}_{0,d}$ by means of the
stable maps. Consider the natural map 
$$\phi:\Hb_{0,d}\rightarrow\overline{\mathfrak{M}}_{0,d}\;,$$
and put 
$$C_d=\phi\left(\Delta_{[3\,1^{d-3}]}^{(2)}\right),\; M_d=\phi\left(\Delta_{[2^2\,1^{d-4}]}^{(2)}\right),\;
\Delta_d=\phi\left(\Delta_{[1^d]}^{(2)}\right).$$
The strata $C_d,M_d,\Delta_d$ are the divisors in $\overline{\mathfrak{M}}_{0,d}$, whereas $\phi(\Delta_\mu^{(k)})$ has codimension $\geq 2$ in
$\overline{\mathfrak{M}}_{0,d}$ for $k\geq 3$. According to \cite{LZ}, one has the relation
$$(d-6)C_d-3M_d+3(d-2)\Delta_d=0$$ 
in ${\rm Pic}(\overline{\mathfrak{M}}_{0,d})\otimes\Q$, and an easy check shows that this is consistent with (\ref{g=0}). 

For $g=1$ one has $\lambda=\f{1}{12}\{\infty\}$ on $\Mb_1$, where $\{\infty\}=\Mb_1-\M_1$ is the (one point) boundary divisor.
The preimage $\pi^{-1}(\{\infty\})\subset\Hb_{1,d}-\H_{1,d}$ with respect to the forgetful map (\ref{fm}) is the boundary divisor 
parametrizing nodal admissible covers with $g(C^{(1)})=g(C^{(2)})=0$. Therefore, (\ref{main}) gives a non-trivial relation between
the boundary divisors of $\Hb_{1,d}$. It would be instructive to compare this relation with the results of \cite{VZ}.

For $g=2$ one has $\lambda=\f{1}{10}\delta_0+\f{1}{5}\delta_1$ on $\Mb_2$, where $\delta_0$ (resp. $\delta_1$) is the divisor
of irreducible (resp. reducible) stable nodal curves (cf. \cite{Mu}). The preimage $\pi^{-1}(\delta_1)$ (resp. $\pi^{-1}(\delta_0)$)
in $\Hb_{2,d}-\H_{2,d}$ parametrizes admissible covers with $g(C^{(1)})=g(C^{(2)})=1$ (resp. with $g(C^{(1)})+g(C^{(2)})=1$, where 
the single irreducible genus 1 component intersects an irreducible genus 0 component at exactly two nodes). In this case we also have 
a nontrivial relation between the boundary divisors of $\Hb_{2,d}$.

{\bf Acknowledgements.} We started this work during our visit to the Max-Planck-Institut f\"ur Mathematik in Bonn, 
and gratefully acknowledge its hospitality and support; PZ also acknowledges his gratitude to IPMU, 
University of Tokyo at Kashiwa, where this work was finalized. DK thanks M.~Mazzocco, and PZ thanks M.~Kazarian, D.~Orlov 
and D.~Zvonkine for stimulating discussions. Our special thanks are to G.~van der Geer for pointing out a wrong sign
in the main formula (\ref{main}), and to the referee for the valuable remarks.


\begin{thebibliography}{}

\bibitem{ACV} D.~Abramovich, A.~Corti, A.~Vistoli, {\it Twisted bundles and admissible covers}, 
Comm. Algebra 31 (2003), 3547-3618.\par 

\bibitem{CH} M.~Cornalba, J.~Harris, {\it Divisor classes associated to families of stable varieties, with
applications to the moduli space of curves}, Ann. Sci. Ecole Norm. Sup. (4) {\bf 21} (1988), 455-475.

\bibitem{D}  B.~Dubrovin, {\it Geometry of 2D topological field
theories,} Lecture Notes in Math. {\bf 1620} (1996), 120-348.\par

\bibitem{F} J.~Fay, {\it Theta functions on Riemann surfaces,}
Lecture Notes in Math. {\bf 352} (1973).\par

\bibitem{F2} J.~Fay, {\it Kernel functions, analytic torsion, and
moduli spaces,} Memoirs of the AMS {\bf 464} (1992).\par

\bibitem{HMu} J.~Harris, D.~Mumford, {\it On the Kodaira dimension of the
moduli space of curves}, Invent. Math. {\bf 67} (1982) 23-86. \par

\bibitem{I} E.-N.~Ionel, {\it Topological recursive relations in $H^{2g}({\mathcal{M}}_{g,n})$},
Invent. Math. {\bf 148} (2002), 627-658.\par

\bibitem{KK1} A.~Kokotov, D.~Korotkin,  {\it Isomonodromic tau-function
of Hurwitz Frobenius manifolds and its applications}, Int.
Math. Res. Notices,  {\bf 2006}, 1-34.\par

\bibitem{KK2} A.~Kokotov, D.~Korotkin,
{\it Tau-functions on spaces of Abelian
differentials and higher genus generalization of Ray-Singer formula},
J. Diff. Geom. {\bf 82} (2009), 35-100.\par 

\bibitem{LZ} S.~Lando, D.~Zvonkine, 
{\it Counting ramified converings and intersection theory on spaces of rational functions. I. Cohomology of Hurwitz spaces,}
Mosc. Math. J. {\bf 7} (2007) 85-107.\par

\bibitem{Mu} D.~Mumford, {\it Towards an enumerative geometry of the moduli space of curves.} 
In: Arithmetic and Geometry, Progr. Math. {\bf 36}, Birkha\"user, Boston (1983), 271-326.\par

\bibitem{SMJ} M.~Sato, T.~Miwa, M.~Jimbo, {\it Holonomic quantum fields II. The Riemann-Hilbert
problem}, Publ. RIMS, Kyoto Univ., {\bf 15} (1979), 201-278.\par

\bibitem{T} A.~N.~Tyurin, {\it On periods of quadratic differentials}, Uspekhi Mat. Nauk, {\bf 33}:6(204) (1978), 
149–195.\par  

\bibitem{VZ} R.~Vakil, A.~Zinger, {\it A desingularization of the main component of the moduli space of genus one stable maps to projective space}, Geom. Topol. {\bf 12} (2008), 1-95.\par 
 
\end{thebibliography}
\end{document}